\documentclass[reqno]{amsart}
\usepackage{amsthm,amsmath,amsfonts,amssymb,enumerate,latexsym,mathrsfs,lineno}

\usepackage{graphicx}
\pagestyle{plain}


\newtheorem{thm}{Theorem}

\newtheorem*{thm*}{Theorem}

\newtheorem*{defn*}{Definition}

\newtheorem{claimm}{Claim}

\newtheorem{lemma}{Lemma}

\newtheorem{cor}{Corollary}
\newtheorem*{cor*}{Corollary}

\newcommand{\N}{\mathbb{N}}

\newcommand{\funct}[2]{#1 \longrightarrow #2}
\newcommand{\arrows}[3]{\longrightarrow {#1}^{#2}_{#3}}
\newcommand{\restrict}[2]{#1\mspace{-2mu}\mathbin{\upharpoonright}\mspace{-1mu} #2}

\newcommand{\C}{\mathcal{C}}
\newcommand{\oC}{\mathcal{C}^<}
\newcommand{\oK}{\mathcal{K}^<}
\newcommand{\h}{\mathcal{H}}
\newcommand{\oH}{\mathcal{H}^<}

\newcommand{\oM}{\mathcal{M}^<}

\newcommand{\Aut}{\mathrm{Aut}}

\newcommand{\m}[1]{\textbf{#1}}
\newcommand{\hm}[1]{\widehat{\textbf{#1}}}

\newcommand{\mc}[1]{\widetilde{\textbf{#1}}}

\begin{document}

\author{L. Nguyen Van Th\'{e} $^*$}

\address{Institut de math\'ematiques, Universit\'e de Neuch\^atel, Rue Emile-Argand 11, 2000 Neuch\^atel, Switzerland}

\email{lionel.nguyen@unine.ch}

\thanks{$^*$ Research supported by Swiss National Fund Grant \# 20-118014.1}

\title{Some Ramsey theorems for finite $n$-colorable and $n$-chromatic graphs}

\subjclass[2000]{05C55}
\keywords{Ramsey theory, $n$-colorable graphs, $n$-chromatic graphs}
\date{July, 2009}

\begin{abstract}
Given a fixed integer $n$, we prove Ramsey-type theorems for the classes of all finite ordered $n$-colorable graphs, finite $n$-colorable graphs, finite ordered $n$-chromatic graphs, and finite $n$-chromatic graphs.  

\end{abstract}

\maketitle

\section{Introduction}

General Ramsey properties of classes of finite structures have been extensively studied over the last fourty years (see \cite{KPT} for a recent impulse), and many deep partition phenomena about finite graphs are new well understood (in the sequel, all graphs are simple and loopless, and therefore seen as structures of the form $\m{X}=(X, E^{\m{X}})$, where $X$ is a set and $E^{\m{X}}$ an irreflexive symmetric binary relation on $X$). For example, if $n$ is a fixed integer and $\m{X}, \m{Y}$ are fixed finite ordered $K_n$-free graphs (ie, graphs, with a linear ordering, not containing the complete graph $K_n$ as an induced subgraph), then for every $k \in \N$, there exists a finite ordered $K_n$-free graph $\m{Z}$ such that \[ \m{Z} \arrows{(\m{Y})}{\m{X}}{k}. \] 

That symbol means that whenever isomorphic (induced) copies of $\m{X}$ in $\m{Z}$ are colored with $k$ colors, there is an isomorphic copy $\mc{Y}$ of $\m{Y}$ in $\m{Z}$ where all copies of $\m{X}$ have same color. This result, arguably one of the most important results in structural Ramsey theory, is due to Ne\v set\v ril and R\"odl \cite{NR}. Using standard jargon, it states that the class $\oH _n$ of all finite ordered $K_n$-free graphs has the \emph{Ramsey property}, or is a \emph{Ramsey class}. Moreover, $\oH _n$ is known to be one of the few classes of finite ordered graphs with that property: Ramsey classes of finite ordered graphs have been classified be Ne\v set\v ril in \cite{N} (there is however a mistake in \cite{N}, see for example \cite{KPT}, Section 6(A) for the correct result).  

One of the reasons for which one usually works with ordered graphs instead of graphs is simply that Ramsey property fails if linear orderings are removed. However, it does not fail \emph{so} badly. For example, let $\m{X}, \m{Y}$ be fixed finite $K_n$-free graphs and let $k \in \N$. Then there exists a finite $K_n$-free graph $\m{Z}$ such that \[ \m{Z} \arrows{(\m{Y})}{\m{X}}{k, |\m{X}|!/|\Aut(\m{X})|}. \] 

That means that whenever isomorphic copies of $\m{X}$ in $\m{Z}$ are colored with $k$ colors, there is an isomorphic copy $\mc{Y}$ of $\m{Y}$ in $\m{Z}$ such that no more than $|\m{X}|!/|\Aut(\m{X})|$ colors appear on the set $\binom{\mc{Y}}{\m{X}}$ of copies of $\m{X}$ in $\mc{Y}$. Moreover, if $\m{X}$ is fixed, the number $|\m{X}|!/|\Aut(\m{X})|$ is optimal in the sense that it is minimal for the previous property to hold for every $\m{Y} \in \h _n$. Using the terminology introduced by Fouch\'e, and writing $\h _n$ for the class of all finite $K_n$-free graphs, we say that every $\m{X} \in \h _n$ has a \emph{finite Ramsey degree in} $\h _n$, equal to \[ t_{\h _n}(\m{X}) = |\m{X}|!/|\Aut(\m{X})|.\]

The purpose of this paper is to provide a full analysis of Ramsey degrees in the context of $n$-colorable and $n$-chromatic graphs, where $n$ is a fixed integer. In the ordered case, our results read as follows: we write $\oC _n$ for the class of all finite ordered $n$-colorable graphs and $\oK _n$ for the class of all finite ordered colored graphs with colors in $[n]$. Those are finite ordered graphs $(X, E^{\m{X}}, <^{\m{X}})$ together with a map $\lambda^{\m{X}}$ that colors the vertices with colors in $[n]=\{1,\ldots, n \}$ and so that not two adjacent vertices receive same color. Because elements of $\oK _n$ may be seen as finite ordered graphs where vertices receive colors in $[n]$ in such a way that certain configurations, namely pairs of adjacent vertices with same color, are forbidden, deep results contained in the aforementioned article of Ne\v set\v ril and R\"odl \cite{NR} imply that: 

\begin{thm}[Ne\v set\v ril-R\"odl, \cite{NR}]
\label{thm:RoK}
The class $\oK _n$ has the Ramsey property.
\end{thm}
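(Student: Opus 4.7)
The plan is to recast the class $\oK_n$ as a class of the form $\mathrm{Forb}(\mathcal{F})$ for a finite set $\mathcal{F}$ of irreducible finite ordered structures in a suitable relational language, and then invoke the Nešetřil--Rödl theorem on Ramsey classes defined by forbidden configurations (the main technical result of \cite{NR}), which states that for any finite relational language $L$ and any finite family $\mathcal{F}$ of finite irreducible ordered $L$-structures, the class of finite ordered $L$-structures embedding no member of $\mathcal{F}$ has the Ramsey property.

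First, I would enrich the language of ordered graphs by $n$ unary predicates $P_1, \ldots, P_n$ (one per color), so that an element of $\oK_n$ is seen as an ordered structure $(X, E^{\m{X}}, <^{\m{X}}, P_1^{\m{X}}, \ldots, P_n^{\m{X}})$ where $P_i^{\m{X}} = (\lambda^{\m{X}})^{-1}(\{i\})$. Next, I would identify the forbidden family $\mathcal{F}$: for each $i \in [n]$, the two-vertex ordered structure consisting of a single edge whose endpoints both lie in $P_i$ (a monochromatic edge); for each pair $i \neq j$, the one-vertex structure lying in $P_i \cap P_j$; and the one-vertex structure lying in none of the $P_i$'s. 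A finite ordered $L$-structure omits every member of $\mathcal{F}$ precisely when it is a well-defined element of $\oK_n$, so $\oK_n = \mathrm{Forb}(\mathcal{F})$.

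Finally, I would check the irreducibility hypothesis required by the Nešetřil--Rödl Forb theorem. Since the relational language $L$ is binary and every structure in $\mathcal{F}$ has at most two vertices, irreducibility is automatic: no such structure splits as a nontrivial free amalgam. Applying the theorem then yields the Ramsey property for $\oK_n$. The only real subtlety is to be sure one is using a version of the Forb theorem that accommodates the added unary predicates together with the linear ordering — but this is exactly the setting handled in \cite{NR}, so the argument requires no further work beyond the translation above.
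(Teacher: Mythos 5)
Your proposal is correct and takes essentially the same route as the paper: the paper does not prove Theorem~\ref{thm:RoK} but derives it in one sentence by viewing elements of $\oK_n$ as finite ordered graphs with vertex colors in $[n]$ where monochromatic adjacent pairs are forbidden, and citing the forbidden-configuration results of \cite{NR}. Your write-up simply makes this translation explicit (unary predicates $P_1,\ldots,P_n$, the forbidden family consisting of monochromatic edges, doubly-colored vertices, and uncolored vertices, and the automatic irreducibility of structures on at most two vertices), which is exactly the intended reading.
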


In the present paper, this result is used to capture Ramsey degrees in $\oC _n$. For $\m{X}=(X, E^{\m{X}}, <^{\m{X}}) \in \oC _n$, an \emph{extension of \m{X} in $\oK _n$} is an element $\hm{X}$ of $\oK _n$ obtained from $\m{X}$ by adding a coloring $\lambda^{\m{X}}$. In that case, say also that $\m{X}$ is the \emph{reduct} of $\hm{X}$ in $\oC _n$. We denote by $\sigma(\m{X})$ the number of non isomorphic extensions of $\m{X}$ in $\oK _n$. Then: 

\begin{thm}
\label{thm:RdoC}
Every element $\m{X}$ of $\oC _n$ has a finite Ramsey degree in $\oC _n$ equal to \[ t_{\oC _n}(\m{X})=\sigma(\m{X}).\]
\end{thm}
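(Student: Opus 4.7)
The plan is to establish $t_{\oC_n}(\m{X})\le\sigma(\m{X})$ and $t_{\oC_n}(\m{X})\ge\sigma(\m{X})$ separately, in each case with Theorem~\ref{thm:RoK} as the main tool. Write $\sigma=\sigma(\m{X})$ and list the non-isomorphic extensions of $\m{X}$ in $\oK_n$ as $\hm{X}_1,\ldots,\hm{X}_{\sigma}$.

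For the upper bound I would apply the Ramsey property of $\oK_n$ iteratively, once for each extension type. Fix an arbitrary extension $\hm{Y}$ of $\m{Y}$ in $\oK_n$ and build a chain $\hm{Z}_0=\hm{Y},\hm{Z}_1,\ldots,\hm{Z}_{\sigma}$ in $\oK_n$ with $\hm{Z}_{i+1}\arrows{(\hm{Z}_i)}{\hm{X}_{i+1}}{k}$, and let $\m{Z}$ be the reduct of $\hm{Z}_{\sigma}$ in $\oC_n$. Given any $k$-coloring $c$ of $\binom{\m{Z}}{\m{X}}$, the coloring of $\hm{Z}_{\sigma}$ equips every copy of $\m{X}$ in $\m{Z}$ with an inherited extension isomorphic to exactly one $\hm{X}_i$, so that $c$ splits into $k$-colorings $c_i$ of $\binom{\hm{Z}_{\sigma}}{\hm{X}_i}$. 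Processing $i$ from $\sigma$ down to $1$, I would successively extract a nested copy of $\hm{Z}_{i-1}$ on which $c_i$ is constant (monochromaticity of the earlier $c_j$'s being preserved under restriction). The final copy is an isomorphic copy $\hm{Y}'$ of $\hm{Y}$ on which every $c_i$ is constant, and its reduct $\m{Y}'\subseteq\m{Z}$ is a copy of $\m{Y}$ realizing at most one $c$-color per extension type, hence at most $\sigma$ colors.

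For the lower bound the crucial ingredient is an expansion-type lemma: for every $\m{X}\in\oC_n$ there exists $\m{Y}^*\in\oC_n$ such that every extension of $\m{Y}^*$ in $\oK_n$ contains each $\hm{X}_i$ as an induced ordered $n$-colored subgraph. Granted this, fix $\m{Y}\in\oC_n$ containing $\m{Y}^*$; then for any $\m{Z}\in\oC_n$ containing $\m{Y}$, choosing an arbitrary extension $\hm{Z}$ of $\m{Z}$ and coloring each copy of $\m{X}$ in $\m{Z}$ by the isomorphism type of the extension it inherits from $\hm{Z}$ produces a $\sigma$-coloring of $\binom{\m{Z}}{\m{X}}$ in which every copy of $\m{Y}$ realizes all $\sigma$ colors (as the copy of $\m{Y}^*$ sitting inside already does, by the lemma).

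The main obstacle is the expansion lemma, since the upper bound is a routine iterated application of Theorem~\ref{thm:RoK}. To produce $\m{Y}^*$, my plan is to build, for each extension type $\hm{X}_i$, an ordered $n$-colorable graph whose every proper $n$-coloring necessarily embeds $\hm{X}_i$ (for example, a sufficiently large and suitably interleaved ordered complete $n$-partite graph enforces all color patterns on cliques, and a similar analysis, combined with amalgamation in $\oC_n$, handles more general $\m{X}$), and then amalgamate the results inside $\oC_n$. An alternative, more abstract route is a compactness argument on a countable $\oC_n$-universal ordered $n$-colorable graph whose every proper $n$-coloring realizes every finite element of $\oK_n$, from which $\m{Y}^*$ can be extracted as a sufficiently large finite substructure.
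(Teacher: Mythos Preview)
Your upper bound argument is correct and is exactly the paper's proof: iterate Theorem~\ref{thm:RoK} once per extension type and pass to the reduct. Your lower bound \emph{strategy} also matches the paper's: the paper constructs a specific $\m{Z}\in\oC_n$ (your $\m{Y}^*$) with the property that every extension of $\m{Z}$ in $\oK_n$ embeds all of $\hm{X}_1,\ldots,\hm{X}_\sigma$, and then uses the induced-type coloring exactly as you describe.

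Where your proposal is incomplete is precisely where you flag it: the construction of $\m{Y}^*$. Your suggestion of a ``suitably interleaved ordered complete $n$-partite graph'' only controls the coloring on cliques; for a general $\m{X}$ with non-edges between distinct color classes it is not clear how to force an arbitrary proper $n$-coloring to reproduce the specific edge/non-edge pattern of a given $\hm{X}_i$. The compactness route just relocates the difficulty: you would need to know that every proper $n$-coloring of the $\oC_n$-universal ordered graph realizes every element of $\oK_n$, which is essentially the statement you are trying to prove.

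The paper's construction is concrete and short, and it again uses Theorem~\ref{thm:RoK}. Let $\hm{Y}\in\oK_n$ be the disjoint union of one copy of each $\hm{X}_i$ together with an ordered $K_n$. The easy observation is that if one replaces $\lambda^{\hm{Y}}$ by $f\circ\lambda^{\hm{Y}}$ for any permutation $f$ of $[n]$, the resulting colored structure still embeds every $\hm{X}_i$ (apply $f^{-1}$ to the coloring of $\hm{X}_i$ to locate it among the disjoint pieces). Now apply Theorem~\ref{thm:RoK} with the one-point colored graphs $\hm{K}_1^i$ as the ``$\m{X}$'' structures, iterated over $i=1,\ldots,n$, to obtain $\hm{Z}\in\oK_n$ with the property that for any map $\lambda$ on $Z$ into $[n]$ there is a copy of $\hm{Y}$ inside $\hm{Z}$ on which $\lambda$ depends only on $\lambda^{\hm{Z}}$. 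If $\lambda$ is a proper $n$-coloring, the embedded $K_n$ forces this dependence to be a permutation, and the previous observation finishes. The reduct of $\hm{Z}$ in $\oC_n$ is your $\m{Y}^*$.
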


As a direct corollary, we obtain, for the class $\chi ^< _n$ of all finite ordered $n$-chromatic graphs: 

\begin{cor}
\label{thm:Rdochi}
Every element $\m{X}$ of $\chi ^< _n$ has a finite Ramsey degree in $\chi ^< _n$ equal to \[ t_{\chi ^< _n}(\m{X})=\sigma(\m{X}).\]
\end{cor}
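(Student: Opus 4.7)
The plan is to deduce Corollary \ref{thm:Rdochi} directly from Theorem \ref{thm:RdoC} by exploiting the inclusion $\chi^<_n \subseteq \oC _n$, together with the elementary observation that if an $n$-colorable ordered graph $\m{Z}$ contains an induced copy of an $n$-chromatic graph $\m{Y}$, then $n = \chi(\m{Y}) \le \chi(\m{Z}) \le n$, so $\m{Z}$ is itself $n$-chromatic. This pivotal observation will let me transfer both the existence of witnesses and the optimality of the bound between the two classes.

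For the upper bound $t_{\chi^<_n}(\m{X}) \le \sigma(\m{X})$, I would fix $\m{X},\m{Y} \in \chi^<_n \subseteq \oC _n$ and $k \in \N$ and apply Theorem \ref{thm:RdoC} to produce some $\m{Z} \in \oC _n$ with $\m{Z} \arrows{(\m{Y})}{\m{X}}{k, \sigma(\m{X})}$. The arrow relation forces $\m{Z}$ to contain an induced copy of $\m{Y}$, so by the key observation $\m{Z}$ lies in $\chi^<_n$, and the very same $\m{Z}$ then witnesses the arrow relation inside $\chi^<_n$.

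For the lower bound $t_{\chi^<_n}(\m{X}) \ge \sigma(\m{X})$, I would invoke the optimality part of Theorem \ref{thm:RdoC}: there exist $\m{Y}^* \in \oC _n$ and $k \in \N$ such that no $\m{Z} \in \oC _n$ satisfies $\m{Z} \arrows{(\m{Y}^*)}{\m{X}}{k, \sigma(\m{X})-1}$. Any such $\m{Y}^*$ must contain $\m{X}$ as an induced subgraph (otherwise the arrow relation would hold trivially with zero colors on copies of $\m{X}$), so the key observation applied to $\m{X} \in \chi^<_n$ and $\m{Y}^* \in \oC _n$ forces $\m{Y}^* \in \chi^<_n$. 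Since $\chi^<_n \subseteq \oC _n$, the non-existence of a suitable $\m{Z} \in \oC _n$ restricts trivially to non-existence of a suitable $\m{Z} \in \chi^<_n$, yielding the lower bound for $\m{Y}^*$ in $\chi^<_n$.

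I do not anticipate any serious obstacle: the statement reduces mechanically to Theorem \ref{thm:RdoC}. The only subtlety worth flagging is the purely notational remark that $\sigma(\m{X})$, being defined via $\oK _n$-extensions of $\m{X}$, does not depend on the ambient class, so the two classes $\chi^<_n$ and $\oC _n$ attach the same numerical Ramsey degree to each $\m{X}$. This is presumably the reason the author chose to state the result as a corollary rather than as a separate theorem.
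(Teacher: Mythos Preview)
Your proposal is correct and matches the paper's intent: the paper gives no explicit proof, merely declaring the result a ``direct corollary'' of Theorem~\ref{thm:RdoC}, and the argument you spell out---transferring both bounds via the observation that any $\m{Z}\in\oC_n$ containing an induced copy of an $n$-chromatic graph is itself $n$-chromatic---is precisely the natural justification the paper leaves implicit. Your remark that any witness $\m{Y}^*$ for the lower bound must contain $\m{X}$ (and hence lie in $\chi^<_n$) is the only point requiring a moment's thought, and you handle it correctly.
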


When linear orderings are dropped, results keep a similar flavor. We write $\C _n$ for the class of all finite $n$-colorable graphs, and $\oM _n$ for the class of all \emph{monotone} elements $\m{X}$ of $\oK _n$, ie where the coloring function $\lambda^{\m{X}}$ is increasing (when seen as a map from $(X,<^{\m{X}})$ to $[n]$). It turns out that $\oM _n$ has the Ramsey property (see Lemma \ref{thm:RoM}), and that this fact can be used to compute Ramsey degrees in $\C _n$. As previously, if $\m{X}=(X, E^{\m{X}}) \in \C _n$, an extension of $\m{X}$ in $\oM _n$ is an element $\hm{X}$ of $\oM _n$ obtained from $\m{X}$ by adding a linear ordering $<^{\m{X}}$ and an increasing coloring $\lambda^{\m{X}}$. Then, say also that $\m{X}$ is the reduct of $\hm{X}$ in $\C _n$. We denote by $\tau(\m{X})$ the number of non isomorphic extensions of $\m{X}$ in $\oM _n$. 

\begin{thm}
\label{thm:RdC}
Every element $\m{X}$ of $\C _n$ has a finite Ramsey degree in $\C _n$ equal to \[ t_{\C _n}(\m{X})=\tau(\m{X}).\]
\end{thm}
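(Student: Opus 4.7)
The plan is to verify $t_{\C_n}(\m{X})\le\tau(\m{X})$ and $t_{\C_n}(\m{X})\ge\tau(\m{X})$ separately, using the Ramsey property of $\oM_n$ (Lemma \ref{thm:RoM}) in both directions. Enumerate once and for all the non-isomorphic extensions $\hm{X}_1,\ldots,\hm{X}_s$ of $\m{X}$ in $\oM_n$, with $s=\tau(\m{X})$.

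For the upper bound, given $\m{Y}\in\C_n$ and $k\in\N$, I would fix any extension $\hm{Y}$ of $\m{Y}$ in $\oM_n$ and iterate Lemma \ref{thm:RoM}, building a sequence $\hm{Y}=\hm{Z}_0,\hm{Z}_1,\ldots,\hm{Z}_s$ in $\oM_n$ with $\hm{Z}_i\arrows{(\hm{Z}_{i-1})}{\hm{X}_i}{k}$ for each $i$, and then let $\m{Z}$ be the reduct of $\hm{Z}_s$ in $\C_n$. Given any $k$-coloring $c$ of $\binom{\m{Z}}{\m{X}}$, I would pull $c$ back along the reduct map to $k$-colorings $c_i$ of $\binom{\hm{Z}_s}{\hm{X}_i}$, and peel off nested copies $\mc{Z}_{s-1}\supseteq\cdots\supseteq\mc{Y}$ isomorphic respectively to $\hm{Z}_{s-1},\ldots,\hm{Y}$, each monochromatic for the corresponding $c_i$. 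On the reduct of $\mc{Y}$, every copy of $\m{X}$ extends, inside $\mc{Y}$, to exactly one of the $\hm{X}_i$ (the one inherited from the monotone structure), so $c$ takes at most $s$ values.

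For the lower bound, I would first produce a graph $\m{W}\in\C_n$ with the property that every extension of $\m{W}$ in $\oM_n$ embeds each $\hm{X}_i$, and take $\m{Y}=\m{W}$. Given any $\m{Z}\in\C_n$ containing $\m{W}$, fix any monotone extension $\hm{Z}\in\oM_n$ of $\m{Z}$ and color each copy $\mc{X}$ of $\m{X}$ in $\m{Z}$ by the isomorphism type, among $\hm{X}_1,\ldots,\hm{X}_s$, that $\mc{X}$ inherits from $\hm{Z}$. Any copy $\mc{Y}$ of $\m{W}$ in $\m{Z}$ inherits from $\hm{Z}$ an extension of $\m{W}$ in $\oM_n$, which by the defining property of $\m{W}$ realizes all $s$ types, so $c$ uses all $s$ colors on $\binom{\mc{Y}}{\m{X}}$; this rules out Ramsey degree strictly below $s$.

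The main obstacle is the construction of $\m{W}$, which amounts to verifying the \emph{expansion property} of $\oM_n$ relative to $\C_n$: for every $\m{X}\in\C_n$ there exists $\m{W}\in\C_n$ whose every extension in $\oM_n$ embeds every extension of $\m{X}$ in $\oM_n$. I would prove this by a direct amalgamation inside $\C_n$, exploiting the fact that the number of non-isomorphic extensions of a finite graph is finite at each stage and that $\C_n$ admits free amalgamation, iteratively enlarging a candidate $\m{W}$ to realize every missing type in every extension; showing termination is the delicate point. A conceptually cleaner alternative is to extract $\m{W}$ by compactness from the Fra\"iss\'e limit of $\oM_n$, whose reduct is sufficiently homogeneous to witness the property on a finite substructure.
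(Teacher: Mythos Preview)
Your upper bound and the architecture of your lower bound are exactly those of the paper: iterate Lemma \ref{thm:RoM} over the $\hm{X}_i$ for $t_{\C_n}(\m{X})\le\tau(\m{X})$, and for $t_{\C_n}(\m{X})\ge\tau(\m{X})$ reduce to the \emph{expansion property} of $\oM_n$ over $\C_n$, i.e.\ produce some $\m{W}\in\C_n$ every monotone extension of which embeds every $\hm{X}_i$, then color copies of $\m{X}$ in an arbitrary $\m{Z}$ by the type they inherit from a fixed monotone extension of $\m{Z}$.

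Where you diverge from the paper, and where there is a real gap, is in the construction of $\m{W}$. Your iterative amalgamation sketch is not merely delicate but circular as stated: enlarging $\m{W}$ to realize a missing $\hm{X}_i$ in some extension creates new monotone extensions of the enlarged $\m{W}$, so there is no a priori reason for the process to terminate, and free amalgamation in $\C_n$ gives you no control over which monotone structures sit on the amalgam. Your compactness alternative via the Fra\"iss\'e limit of $\oM_n$ is not an independent argument either: extracting a finite $\m{W}$ from the limit presupposes exactly the expansion property you are trying to prove.

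The paper avoids both issues by deriving the expansion property \emph{from the Ramsey property of $\oM_n$ itself}. Concretely, it starts from $\hm{Y}$, a monotone disjoint union of all the $\hm{X}_i$ together with a $K_n$, and builds $\hm{T}\in\oM_n$ in two Ramsey passes: first apply Lemma \ref{thm:RoM} with the objects $\hm{K}_2^i$ (two non-adjacent vertices of color $i$) and two colors, so that for any linear ordering whose color classes are convex one finds a copy of $\hm{Y}$ on which the new ordering either agrees with or reverses $<^{\hm{Y}}$ on each color class; then apply Lemma \ref{thm:RoM} with the points $\hm{K}_1^i$ and $n^2$ colors, so that any proper $n$-coloring of the underlying graph becomes, on a subcopy, a permutation of the canonical coloring (the $K_n$ forces the permutation to be a bijection). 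A separate elementary claim shows that replacing $\lambda^{\hm{Y}}$ by $f\circ\lambda^{\hm{Y}}$ and reversing the order on some color classes does not lose any isomorphism type of extension of $\m{X}$. The reduct $\m{T}$ of $\hm{T}$ is then your $\m{W}$. This is the standard ``Ramsey $\Rightarrow$ ordering/expansion'' mechanism, and it is what your proposal is missing.
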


It follows that for the class $\chi _n$ of all finite $n$-chromatic graphs:

\begin{cor}
\label{thm:Rdchi}
Every element $\m{X}$ of $\chi _n$ has a finite Ramsey degree in $\chi _n$ equal to \[ t_{\chi _n}(\m{X})=\tau(\m{X}).\]
\end{cor}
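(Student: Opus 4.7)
The plan is to deduce Corollary \ref{thm:Rdchi} from Theorem \ref{thm:RdC} via a disjoint-union trick. The pivotal observation is that for any $\m{W} \in \C_n$, the graph $\m{W} \sqcup K_n$ lies in $\chi_n$: it is $n$-colorable as a disjoint union of two $n$-colorable graphs, and it contains $K_n$, forcing chromatic number exactly $n$. Since $\chi_n \subseteq \C_n$, this lets us move freely between the two classes.

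For the upper bound $t_{\chi_n}(\m{X}) \le \tau(\m{X})$, given $\m{X}, \m{Y} \in \chi_n$ and $k \in \N$, I would first apply Theorem \ref{thm:RdC} to produce $\m{Z}' \in \C_n$ with $\m{Z}' \arrows{(\m{Y})}{\m{X}}{k, \tau(\m{X})}$, and set $\m{Z} = \m{Z}' \sqcup K_n \in \chi_n$. Any $k$-coloring $c$ of $\binom{\m{Z}}{\m{X}}$ restricts to $\binom{\m{Z}'}{\m{X}}$, and the Ramsey property in $\C_n$ yields an induced copy $\mc{Y}$ of $\m{Y}$ inside $\m{Z}'$ on which at most $\tau(\m{X})$ colors appear. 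Since $\mc{Y} \subseteq \m{Z}'$, every copy of $\m{X}$ in $\mc{Y}$ lies in $\m{Z}'$, so the bound survives when we return to the original coloring $c$.

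For the matching lower bound, I would take $\m{Y}^* \in \C_n$ and $k \in \N$ witnessing $t_{\C_n}(\m{X}) \ge \tau(\m{X})$ in Theorem \ref{thm:RdC}, and set $\m{Y} = \m{Y}^* \sqcup K_n \in \chi_n$. For any $\m{Z} \in \chi_n \subseteq \C_n$, take the bad $k$-coloring $c$ of $\binom{\m{Z}}{\m{X}}$ under which every induced copy of $\m{Y}^*$ in $\m{Z}$ uses at least $\tau(\m{X})$ colors. Every induced copy of $\m{Y}$ in $\m{Z}$ contains its $\m{Y}^*$-summand as an induced subgraph, and this subgraph already exhibits at least $\tau(\m{X})$ colors under $c$, so the enclosing copy of $\m{Y}$ does too.

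There is no real obstacle beyond checking that induced copies split predictably across the summands of a disjoint union, which is immediate. All the combinatorial content of $\tau(\m{X})$---the ordering and increasing-coloring extensions in its definition---is already encapsulated in Theorem \ref{thm:RdC}, leaving only this short structural argument to complete the corollary.
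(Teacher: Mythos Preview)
Your argument is correct. The paper itself gives no explicit proof of Corollary~\ref{thm:Rdchi}; it is simply announced as following from Theorem~\ref{thm:RdC}, so there is not much to compare against. Your disjoint-union trick is exactly the kind of one-line reduction the author has in mind, and both halves go through as you describe: for the upper bound, $\m{Z}'\sqcup K_n$ lands in $\chi_n$ while the Ramsey behaviour on $\binom{\m{Z}'}{\m{X}}$ is unaffected; for the lower bound, any induced copy of $\m{Y}^*\sqcup K_n$ contains an induced copy of $\m{Y}^*$, which already sees all $\tau(\m{X})$ colors.

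One minor observation that makes the lower bound even shorter, if you look inside the proof of Theorem~\ref{thm:RdC}: the witness $\m{T}$ constructed there already contains a copy of $K_n$ (since $\hm{Y}$ was built to include one, and each $\hm{Y}_i$, $\hm{Z}_i$ contains a copy of its predecessor), so $\m{T}\in\chi_n$ to begin with and no $\sqcup K_n$ adjustment is needed on that side. Your black-box use of Theorem~\ref{thm:RdC} is of course cleaner, since it avoids reopening the proof.
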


None of the techniques we use is new. In fact, \emph{all} the techniques we use are standard. We feel however that the results they allow to reach deserve to be mentioned for at least two reasons. First, they extend former results of Fouch\'e \cite{F} where Theorem \ref{thm:RdC} is obtained via posets in the case $n=2$. Second, they provide new information about the partition calculus of classes of graphs which appear naturally in graph theory but which are not obviously connected to the other known classes of graphs for which the Ramsey properties are completely known.  

The paper is organized as follows: Theorem \ref{thm:RdoC} is proved in section \ref{section:RdoC}, while Theorem \ref{thm:RdC} is proved in section \ref{section:RdC}. 

\

\textbf{Acknowledgements}: I would like to express my most sincere gratitude to Alain Valette, thanks to whom this research was carried out at the Institut de Math\'ematiques de l'Universit\'e de Neuch\^atel.

\section{Proof of Theorem \ref{thm:RdoC}}

\label{section:RdoC}

Let $\m{X} \in \oC _n$. We are going to show first that $t_{\oC _n}(\m{X}) \leq \sigma(\m{X})$, and then that $t_{\oC _n}(\m{X}) \geq \sigma(\m{X})$.

\subsection{Proof of $t_{\oC _n}(\m{X}) \leq \sigma(\m{X})$}

\label{subsection:t<sigma}

The inequality $t_{\oC _n}(\m{X}) \leq \sigma(\m{X})$ is a direct consequence of Theorem \ref{thm:RoK}. Let $k \in \N$ and $\m{Y} \in \oC _n$. Let $\hm{Y}$ be any extension of $\m{Y}$ in $\oK _n$. Enumerate the extensions $\hm{X}_1 ,\ldots, \hm{X}_{\sigma(\m{X})}$ of $\m{X}$ in $\oK _n$. By Theorem \ref{thm:RoK}, construct a sequence $(\hm{Y}_i)_{1\leq i \leq n}$ of elements of $\oK_n$ such that \[ \hm{Y}_1 \arrows{(\hm{Y})}{\hm{X}_1 }{k} \] and \[ \hm{Y}_{i+1} \arrows{(\hm{Y}_i)}{\hm{X}_i}{k}. \]
 
Set $\hm{Z}=\hm{Y}_{\sigma(\m{X})}$, and let $\m{Z}$ denote the reduct of $\hm{Z}$ in $\oC _n$. We claim that $\m{Z}$ is as required. Recall that $\binom{\m{Z}}{\m{X}}$ denotes the set of induced isomorphic copies of $\m{X}$ in $\m{Z}$. Let \[ \alpha : \funct{\binom{\m{Z}}{\m{X}}}{[k]}. \]

It induces a map with values in $[k]$ on each of the sets $\binom{\hm{Z}}{\hm{X}_i}$, with $i \in [\sigma(\m{X})]$. By construction of $\hm{Z}=\hm{Y}_{\sigma(\m{X})}$, we can find a copy $\mc{Y}_{\sigma(\m{X})-1}$ of $\hm{Y}_{\sigma(\m{X})-1}$ in $\hm{Z}$ where all copies of $\hm{X}_{\sigma(\m{X})}$ have same $\alpha$-value. Then, by construction of $\hm{Y}_{\sigma(\m{X})-1}$, find a copy $\mc{Y}_{\sigma(\m{X})-2}$ of $\hm{Y}_{\sigma(\m{X})-2}$ in $\mc{Y}_{\sigma(\m{X})-1}$ where all copies of $\hm{X}_{\sigma(\m{X})-1}$ have same $\alpha$-value. Note that in $\mc{Y}_{\sigma(\m{X})-2}$, all copies of $\hm{X}_{\sigma(\m{X})}$ also have same $\alpha$-value. Repeating this process $\sigma(\m{X})$ times, we end up with a copy $\mc{Y}$ of $\hm{Y}$ in $\hm{Z}$ where, for every $i \in [\sigma(\m{X})]$, all copies of $\hm{X}_i$ have same $\alpha$-value. Thus, on $\binom{\m{Y}}{\m{X}}$, the map $\alpha$ takes no more than $\sigma(\m{X})$ values. $\qed$

\subsection{Proof of $t_{\oC _n}(\m{X}) \geq \sigma(\m{X})$}

\label{subsection:t>sigma}

Let $\hm{Y}$ be any disjoint union of all extensions of $\m{X}$ in $\oK _n$, together with an ordered copy of $K_n$ (where all vertices necessarily receive different $\lambda^{\hm{Y}}$-colors). 

\begin{claimm}
\label{claim:Yeq}
Let $f$ be a permutation of $[n]$. Then every extension of $\m{X}$ in $\oK _n$ embeds in $(\hat{Y}, E^{\hm{Y}}, <^{\hm{Y}}, f \circ \lambda^{\hm{Y}})$. 
\end{claimm}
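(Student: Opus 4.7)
The plan is to exploit the fact that composing a proper $[n]$-coloring with any permutation of $[n]$ is still a proper $[n]$-coloring, so the permutation group $S_n$ acts on the set of extensions of $\m{X}$ in $\oK_n$. This should make the claim essentially immediate once unpacked.

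More precisely, let $\hm{X}_i$ be an arbitrary extension of $\m{X}$ in $\oK_n$. I first define a new candidate extension $\hm{X}'$ of $\m{X}$ by equipping the ordered graph $\m{X}$ with the coloring $\lambda^{\hm{X}'} := f^{-1} \circ \lambda^{\hm{X}_i}$. Because $f^{-1}$ is a bijection on $[n]$, any two vertices that were assigned different colors by $\lambda^{\hm{X}_i}$ remain differently colored, so the coloring is still proper and $\hm{X}' \in \oK_n$. In particular, $\hm{X}'$ is an extension of $\m{X}$ in $\oK_n$.

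By construction of $\hm{Y}$ as a disjoint union that contains (a copy of) every extension of $\m{X}$ in $\oK_n$, there is an embedding $\varphi : \hm{X}' \hookrightarrow \hm{Y}$ in $\oK_n$, namely the inclusion into the corresponding disjoint component. This $\varphi$ preserves edges and the order and satisfies $\lambda^{\hm{Y}} \circ \varphi = \lambda^{\hm{X}'} = f^{-1} \circ \lambda^{\hm{X}_i}$. Applying $f$ on the left, $f \circ \lambda^{\hm{Y}} \circ \varphi = \lambda^{\hm{X}_i}$, which says exactly that $\varphi$ is an embedding of $\hm{X}_i$ into $(\hat{Y}, E^{\hm{Y}}, <^{\hm{Y}}, f \circ \lambda^{\hm{Y}})$.

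I do not expect any real obstacle here. The only delicate point is the reading of the phrase ``disjoint union of all extensions'', which I interpret as including at least one representative of each isomorphism class in $\oK_n$; given that reading, the matching copy of $\hm{X}'$ is automatically present. The auxiliary ordered copy of $K_n$ appended to $\hm{Y}$ does not appear to play any role in this particular claim and is presumably used elsewhere in the surrounding lower-bound argument for $t_{\oC_n}(\m{X}) \geq \sigma(\m{X})$.
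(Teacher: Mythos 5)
Your proof is correct and is essentially the paper's own argument: the paper also applies the given extension's coloring pre-composed with $f^{-1}$ to obtain an extension of $\m{X}$ in $\oK_n$, embeds it into $\hm{Y}$, and observes that under the recoloring $f \circ \lambda^{\hm{Y}}$ the image becomes isomorphic to the original extension. Your version merely spells out the bookkeeping (properness of the recolored structure and the identity $f \circ \lambda^{\hm{Y}} \circ \varphi = \lambda^{\hm{X}_i}$) that the paper leaves implicit, and your reading of ``disjoint union of all extensions'' and of the role of the appended $K_n$ both match the paper's intent.
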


\begin{proof}
Let $\hm{X}$ be an extension of $\m{X}$ in $\oK _n$. Consider $(\hat{X}, E^{\hm{X}}, <^{\hm{X}}, f^{-1} \circ \lambda^{\hm{X}})$. It is an extension of $\m{X}$, so it embeds in $\hm{Y}$. Under the new coloring $\lambda$, it becomes isomorphic to $\hm{X}$. $\qedhere$ \end{proof}

For $i \in [n]$, let $\hm{K}_1 ^i$ denote the one-point ordered graph with color $i$. By Theorem \ref{thm:RoK}, construct a sequence $(\hm{Y}_i)_{1\leq i \leq n}$ of elements of $\oK_n$ such that \[ \hm{Y}_1 \arrows{(\hm{Y})}{\hm{K}_1 ^1}{n^2} \] and whenever $2\leq i < n$, \[ \hm{Y}_{i+1} \arrows{(\hm{Y}_i)}{\hm{K}_1 ^{i+1}}{n^2}. \] 

Set $\hm{Z}=\hm{Y}_{n}$.

\begin{claimm}
\label{lem:oKeq}
For every coloring $\lambda : \funct{(Z, E^{\m{Z}})}{[n]}$, there is a copy $\mc{Y}$ of $\hm{Y}$ in $\hm{Z}$ where $\lambda = f \circ \lambda^{\mc{Y}}$ for some permutation $f$ of $[n]$.   
\end{claimm}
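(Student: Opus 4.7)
The plan is to use the sequence of Ramsey steps defining $\hm{Z}$ to successively homogenize $\lambda$ on each $\lambda^{\hm{Z}}$-color class. Given a proper coloring $\lambda : (Z, E^{\m{Z}}) \to [n]$, I would assign to each vertex $v \in Z$ the pair $(\lambda^{\hm{Z}}(v), \lambda(v)) \in [n]^2$. The vertices of $\lambda^{\hm{Z}}$-color $n$ are precisely the copies of $\hm{K}_1^n$ in $\hm{Z} = \hm{Y}_n$, and restricting the pair-coloring to them, the partition arrow $\hm{Y}_n \arrows{(\hm{Y}_{n-1})}{\hm{K}_1^n}{n^2}$ produces a copy $\mc{Y}_{n-1}$ of $\hm{Y}_{n-1}$ inside $\hm{Z}$ on which all color-$n$ vertices share a common $\lambda$-value, which I call $f(n) \in [n]$.

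Iterating downward for $i = n-1, n-2, \ldots, 1$, at each stage I would apply $\hm{Y}_i \arrows{(\hm{Y}_{i-1})}{\hm{K}_1^i}{n^2}$ inside the previous copy $\mc{Y}_i$ to extract a copy $\mc{Y}_{i-1}$ of $\hm{Y}_{i-1}$ on which every color-$i$ vertex is sent by $\lambda$ to a single value $f(i)$. Homogeneity for the already-treated color classes $j > i$ is automatic, since any $\oK_n$-embedding respects the coloring $\lambda^{\hm{Y}}$, so vertices that were color-$j$ before remain color-$j$ after restriction and therefore still all take value $f(j)$. After $n$ steps one obtains a copy $\mc{Y}$ of $\hm{Y}$ in $\hm{Z}$ satisfying $\restrict{\lambda}{\mc{Y}} = f \circ \lambda^{\mc{Y}}$ for some map $f : [n] \to [n]$.

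It remains to show that $f$ is a permutation, and this is exactly why the copy of $K_n$ was built into $\hm{Y}$. That $K_n$ survives inside $\mc{Y}$; its $n$ pairwise adjacent vertices receive the $n$ distinct values $1, \ldots, n$ under $\lambda^{\mc{Y}}$, and since $\lambda$ is a proper coloring restricted to a clique, their $\lambda$-images must also be pairwise distinct. Hence $\{f(i) : i \in [n]\} = [n]$, so $f$ is bijective.

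The main obstacle is purely bookkeeping --- making sure each round of homogenization is inherited by all subsequent restrictions --- but this follows from the observation that $\oK_n$-embeddings preserve the vertex coloring. The choice of $n^2$ colors (rather than $n$) in each Ramsey step is cosmetic; it merely allows each step to be phrased uniformly using the pair $(\lambda^{\hm{Z}}(v), \lambda(v))$, even though only the $n$ pairs with first coordinate $i$ are actually used at stage $i$.
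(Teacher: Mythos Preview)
Your proposal is correct and follows essentially the same argument as the paper: define the pair-coloring $\mu(z)=(\lambda^{\hm{Z}}(z),\lambda(z))$, iterate the Ramsey arrows $\hm{Y}_{i+1}\arrows{(\hm{Y}_i)}{\hm{K}_1^{i+1}}{n^2}$ downward to homogenize $\lambda$ on each $\lambda^{\hm{Z}}$-class, and use the embedded $K_n$ to force the resulting map $f$ to be a bijection. Your remarks on inheritance of homogeneity and on the cosmetic role of $n^2$ versus $n$ are accurate and add useful clarification beyond what the paper states explicitly.
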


\begin{proof}
Let $\lambda : \funct{(Z, E^{\hm{Z}})}{[n]}$ be a coloring. Define $\mu : \funct{Z}{[n]\times[n]}$ by $\mu(z)=(\lambda^{\hm{Z}}, \lambda(z))$. By construction of $\hm{Z}=\hm{Y}_n$, we can find a copy $\mc{Y}_{n-1}$ of $\hm{Y}_{n-1}$ in $\hm{Z}$ where all vertices with $\lambda^{\mc{Y}_{n-1}}$-color $n$ have same $\mu$-color, and thus same $\lambda$-color. Then, by construction of $\hm{Y}_{n-1}$, find a copy $\mc{Y}_{n-2}$ of $\hm{Y}_{n-2}$ in $\mc{Y}_{n-1}$ where all vertices with $\lambda^{\mc{Y}_{n-2}}$-color $(n-1)$ have same $\mu$-color. Note that in $\mc{Y}_{n-2}$, vertices with $\lambda^{\mc{Y}_{n-2}}$-color $n$ also have same $\lambda$-color. Repeating this process $n$ times, we end up with a copy $\mc{Y}$ of $\hm{Y}$ in $\mc{Y}_{n}$ where all vertices with same $\lambda^{\mc{Y}}$-color have same $\lambda$-color. Note that because of the presence of the complete $K_n$ in $\hm{Y}$, no two vertices with different $\lambda^{\mc{Y}}$-color can have same $\lambda$-color, because $\lambda$ would then give same color to two adjacent vertices. It follows that on $\tilde{Y}$, $\lambda = f \circ \lambda^{\mc{Y}}$ for some permutation $f$ of $[n]$. $\qedhere$ \end{proof}

Let $\m{Z}$ denote the reduct of $\hm{Z}$ in $\oC _n$. It should be clear from the two previous claims that: 

\begin{claimm}
\label{claim:extoK}
For every extension $(\m{Z}, \lambda)$ of $\m{Z}$ in $\oK _n$, every extension of $\m{X}$ in $\oK _n$ embeds in $(\m{Z}, \lambda)$. 
\end{claimm}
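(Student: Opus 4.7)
The plan is to derive Claim \ref{claim:extoK} as a direct synthesis of the two preceding claims, with essentially no new combinatorial input. The setup gives us an arbitrary extension $(\m{Z}, \lambda)$ of $\m{Z}$ in $\oK_n$, i.e., a proper $n$-coloring $\lambda$ of the underlying graph $(Z,E^{\m{Z}})$; we must exhibit, for an arbitrary extension $\hm{X}$ of $\m{X}$ in $\oK_n$, an embedding $\hm{X} \hookrightarrow (\m{Z},\lambda)$.

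First, I would apply Claim \ref{lem:oKeq} to the coloring $\lambda$. This yields a permutation $f$ of $[n]$ and a copy $\mc{Y}$ of $\hm{Y}$ sitting inside $\hm{Z}$ such that the restriction of $\lambda$ to $\tilde{Y}$ agrees with $f \circ \lambda^{\mc{Y}}$. In other words, the induced substructure of $(\m{Z},\lambda)$ on $\tilde{Y}$ is precisely the reduct $\mc{Y}$ equipped with the recolored coloring $f \circ \lambda^{\mc{Y}}$, which under the canonical isomorphism $\hm{Y} \cong \mc{Y}$ corresponds to the structure $(\hat{Y}, E^{\hm{Y}}, <^{\hm{Y}}, f \circ \lambda^{\hm{Y}})$.

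Next, I would invoke Claim \ref{claim:Yeq} for the permutation $f$: every extension of $\m{X}$ in $\oK_n$, and in particular our given $\hm{X}$, embeds in $(\hat{Y}, E^{\hm{Y}}, <^{\hm{Y}}, f \circ \lambda^{\hm{Y}})$. Composing this embedding with the isomorphism identifying this recolored $\hm{Y}$ with the substructure of $(\m{Z},\lambda)$ on $\tilde{Y}$ produced in the first step yields the desired embedding $\hm{X} \hookrightarrow (\m{Z},\lambda)$.

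There is no genuine obstacle here; the claim is essentially a bookkeeping corollary of the two previous ones. The only point worth writing carefully is the identification that makes the chain of embeddings go through, namely checking that when one transports $\mc{Y}$ back to $\hm{Y}$ through the isomorphism, the ambient coloring $\lambda$ pulls back to precisely $f \circ \lambda^{\hm{Y}}$, which is the exact hypothesis needed to feed into Claim \ref{claim:Yeq}. Once this is observed, the proof closes in one line.
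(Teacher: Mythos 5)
Your proof is correct and is exactly the argument the paper intends: the paper states Claim \ref{claim:extoK} as an immediate consequence of Claims \ref{claim:Yeq} and \ref{lem:oKeq} (``It should be clear from the two previous claims''), and your write-up simply makes that synthesis explicit. The one point you flag as needing care---that $\lambda$ restricted to $\tilde{Y}$ pulls back to $f \circ \lambda^{\hm{Y}}$ under the isomorphism $\hm{Y} \cong \mc{Y}$, which is precisely the hypothesis of Claim \ref{claim:Yeq}---is indeed the only thing to check, and you handle it correctly.
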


Therefore: 

\begin{claimm}
\label{claim:RdoC}
For every $\m{T} \in \oC _n$, there is $\alpha : \funct{\binom{\m{T}}{\m{X}}}{[\sigma(\m{X})]}$ such that for every copy $\mc{Z}$ of $\m{Z}$ in $\m{T}$, $\alpha$ has range $[\sigma(\m{X})]$ on $\binom{\mc{Z}}{\m{X}}$.   
\end{claimm}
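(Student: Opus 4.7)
The plan is to construct $\alpha$ from a single proper $n$-coloring of $\m{T}$ and then invoke Claim \ref{claim:extoK} to show that every copy of $\m{Z}$ witnesses every color.

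First I would use that $\m{T} \in \oC _n$ to fix a proper $n$-coloring $\lambda : \funct{T}{[n]}$, so that $(\m{T},\lambda)$ becomes an element of $\oK _n$. For each induced copy $\tilde{\m{X}} \in \binom{\m{T}}{\m{X}}$, the restriction of $\lambda$ to $\tilde{\m{X}}$ turns it into an extension of $\m{X}$ in $\oK _n$, hence isomorphic to exactly one of the enumerated extensions $\hm{X}_1, \ldots, \hm{X}_{\sigma(\m{X})}$. I would define $\alpha(\tilde{\m{X}})$ to be that unique index, giving a well-defined map $\alpha : \funct{\binom{\m{T}}{\m{X}}}{[\sigma(\m{X})]}$.

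It then remains to check that $\alpha$ is surjective on every $\binom{\mc{Z}}{\m{X}}$. Given a copy $\mc{Z}$ of $\m{Z}$ in $\m{T}$, the restriction $\restrict{\lambda}{\mc{Z}}$ is still a proper coloring, so $(\mc{Z}, \restrict{\lambda}{\mc{Z}})$ is an extension of $\mc{Z}$ in $\oK _n$. Applying Claim \ref{claim:extoK} to this extension, every $\hm{X}_i$ embeds into $(\mc{Z}, \restrict{\lambda}{\mc{Z}})$ as an induced sub-object; each such embedding yields a copy $\tilde{\m{X}} \in \binom{\mc{Z}}{\m{X}}$ whose induced extension is $\hm{X}_i$, hence with $\alpha(\tilde{\m{X}})=i$. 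Therefore $\alpha$ attains every value in $[\sigma(\m{X})]$ on $\binom{\mc{Z}}{\m{X}}$.

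There is essentially no obstacle beyond recognising that Claim \ref{claim:extoK} is the right tool; the only mild care needed is to note that the restriction of a proper coloring remains a proper coloring, so that $(\mc{Z}, \restrict{\lambda}{\mc{Z}})$ genuinely lies in $\oK _n$, and that the isomorphism class of the induced extension is a well-defined function of $\tilde{\m{X}}$ once $\lambda$ has been fixed.
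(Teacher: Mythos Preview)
Your proposal is correct and follows essentially the same argument as the paper: fix an extension $(\m{T},\lambda)\in\oK_n$, define $\alpha$ by the isomorphism type of the induced extension on each copy of $\m{X}$, and use Claim~\ref{claim:extoK} on the restricted coloring of any copy $\mc{Z}$ to conclude surjectivity. The only cosmetic difference is that the paper writes the chosen extension as $\hm{T}$ with coloring $\lambda^{\hm{T}}$ rather than $(\m{T},\lambda)$.
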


\begin{proof}
Let $\m{T} \in \oC _n$, and consider $\hm{T}$ any extension of $\m{T}$ in $\oK _n$. Enumerate the extensions $\hm{X}_1 ,\ldots, \hm{X}_{\sigma(\m{X})}$ of $\m{X}$ in $\oK _n$. On every copy $\mc{X}$ of $\m{X}$ in $\m{T}$, $\lambda^{\hm{T}}$ induces an extension of $\m{X}$ in $\oK_n$, isomorphic to $\hm{X}_i$ for some $i \in [\sigma(\m{X})]$. Denote this $i$ by $\alpha(\mc{X})$. Now, let $\mc{Z}$ be a copy of $\m{Z}$ in $\m{T}$. Then $(\mc{Z}, \restrict{\lambda^{\hm{T}}}{\tilde{Z}})$ is an extension of $\m{Z}$ in $\oK _n$ and by Claim \ref{claim:extoK}, $\alpha$ has range $[\sigma(\m{X})]$ on $\binom{\mc{Z}}{\m{X}}$. $\qedhere$ \end{proof}

The inequality $t_{\oC _n}(\m{X}) \geq \sigma(\m{X})$ follows. 

\subsection{Computation of $t_{\oC _n}(\m{X})$ in elementary cases}

We conclude this section with computations providing some values for $t_{\oC _n}(\m{X})$. The easiest case is obtained when $\m{X}$ is the complete ordered graph $K_m$, with $m\leq n$. It is clear that then, the number of extensions in $\oK _n$ is then equal to $\binom{n}{m} m!$. More generally, when $\m{X}$ is complete $m$-partite, with $m\leq n$ and each part being of size $l$, the number of extensions in $\oK _n$ is then equal to $\binom{n}{m} (lm)!$. Note that when $n>1$, none of those values is ever equal to $1$.

\section{Proof of Theorem \ref{thm:RdC}}

\label{section:RdC}

Let $\m{X} \in \C _n$. We follow the same strategy as in the previous section, and show successively that $t_{\C _n}(\m{X}) \leq \tau(\m{X})$ and that $t_{\C _n}(\m{X}) \geq \tau(\m{X})$. Recall that $\oM _n$ denotes the class of all monotone elements $\m{X}$ of $\oK _n$, ie where the coloring function $\lambda^{\m{X}}$ (seen as a map from $(X,<^{\m{X}})$ to $[n]$) is increasing. Our main ingredient here is the following consequence of Theorem \ref{thm:RoK}:  

\begin{lemma}
\label{thm:RoM}
The class $\oM _n$ has the Ramsey property.
\end{lemma}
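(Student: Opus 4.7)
The plan is to derive the Ramsey property for $\oM _n$ from Theorem \ref{thm:RoK} by a simple reordering argument. Given $\m{X}, \m{Y} \in \oM _n$ and $k \in \N$, I use that $\oM _n \subseteq \oK _n$ and apply Theorem \ref{thm:RoK} to produce $\m{Z}' \in \oK _n$ with $\m{Z}' \arrows{(\m{Y})}{\m{X}}{k}$. I then form $\m{Z} \in \oM _n$ by keeping the same vertex set, edge relation, and coloring $\lambda^{\m{Z}'}$, but replacing $<^{\m{Z}'}$ by the ``color-first'' order $<^{\m{Z}}$, where $u <^{\m{Z}} v$ iff either $\lambda^{\m{Z}'}(u) < \lambda^{\m{Z}'}(v)$, or $\lambda^{\m{Z}'}(u) = \lambda^{\m{Z}'}(v)$ and $u <^{\m{Z}'} v$. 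The claim is that $\m{Z}$ witnesses $\m{Z} \arrows{(\m{Y})}{\m{X}}{k}$ in $\oM _n$.

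The first key observation is that, thanks to the monotonicity of $\m{X}$, any $\oK _n$-embedding $f : \m{X} \to \m{Z}'$ is automatically an $\oM _n$-embedding $f : \m{X} \to \m{Z}$. Indeed, $f$ preserves colors and edges; and if $x <^{\m{X}} y$ then $\lambda^{\m{X}}(x) \leq \lambda^{\m{X}}(y)$, so either the colors of $f(x)$ and $f(y)$ differ and $f(x) <^{\m{Z}} f(y)$ follows directly from the definition of $<^{\m{Z}}$, or the colors coincide and $f(x) <^{\m{Z}'} f(y)$ gives $f(x) <^{\m{Z}} f(y)$. The same calculation shows that on any subset of $\m{Z}'$ whose restricted order $<^{\m{Z}'}$ is monotone, the two orders $<^{\m{Z}'}$ and $<^{\m{Z}}$ coincide.

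Now fix a coloring $\alpha : \binom{\m{Z}}{\m{X}} \to [k]$, where copies are meant in the $\oM _n$ sense. By the preceding paragraph, $\alpha$ restricts to a coloring of the $\oK _n$-copies of $\m{X}$ in $\m{Z}'$, and Theorem \ref{thm:RoK} furnishes an $\oK _n$-copy $\mc{Y}'$ of $\m{Y}$ in $\m{Z}'$ on which this restriction is constant. Since $\m{Y}$ is monotone, $\mc{Y}'$ is itself monotone under $<^{\m{Z}'}$, so on $\mc{Y}'$ the two orders $<^{\m{Z}'}$ and $<^{\m{Z}}$ agree. Consequently, the same vertex set viewed with $<^{\m{Z}}$ is an $\oM _n$-copy $\mc{Y}$ of $\m{Y}$ in $\m{Z}$, and every $\oM _n$-sub-copy of $\m{X}$ inside $\mc{Y}$ is simultaneously an $\oK _n$-sub-copy of $\m{X}$ inside $\mc{Y}'$ (and conversely, by the first key point). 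Hence $\alpha$ is constant on $\binom{\mc{Y}}{\m{X}}$, as wanted.

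The only point that requires some care is the tension between the two notions of ``copy'': globally on $\m{Z}$, the $\oM _n$-copies of $\m{X}$ properly outnumber the $\oK _n$-copies, because the former place no constraint on how the color classes of the image are interlaced by $<^{\m{Z}'}$. What rescues the argument is that this discrepancy vanishes on a substructure whose restricted $<^{\m{Z}'}$ is already monotone, and the hypothesis that $\m{Y} \in \oM _n$ ensures that every $\oK _n$-copy of $\m{Y}$ in $\m{Z}'$ is of this form.
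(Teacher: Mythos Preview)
Your proof is correct and follows essentially the same approach as the paper: apply Theorem~\ref{thm:RoK} to obtain a witness in $\oK_n$, then reorder it color-first to land in $\oM_n$, and observe that on any subset where the original order was already monotone the two orders coincide, so copies of $\m{X}$ and $\m{Y}$ transfer. Your explicit treatment of the asymmetry between $\oK_n$-copies and $\oM_n$-copies (the latter may be strictly more numerous globally, but agree inside any monotone copy of $\m{Y}$) is in fact more careful than the paper's somewhat loose assertion that $\hm{Z}$ and $\hm{Z}^*$ have ``exactly the same copies.''
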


\subsection{Proof of Lemma \ref{thm:RoM}}

We derive Lemma \ref{thm:RoM} from Theorem \ref{thm:RoK}. Let $\hm{X}, \hm{Y} \in \oM _n$ and $k \in \N$. By enriching $\hm{Y}$ if necessary, we may assume that $\lambda^{\hm{Y}}$ takes all values in $[n]$. By Theorem \ref{thm:RoK}, find $\hm{Z} \in \oK _n$ such that \[ \hm{Z} \arrows{(\hm{Y})}{\hm{X}}{k}.\] 

Of course, $\hm{Z}$ may not be monotone but it is easy to see that there is a unique linear ordering $<$ that coincides with $<^{\hm{Z}}$ on all $\lambda^{\hm{Z}}$-preimages and makes the new structure $\hm{Z} ^* := (\hat{Z}, E^{\hm{Z}}, <, \lambda^{\hm{Z}})$ monotone. 

\begin{claimm}
$\hm{Z}^* \arrows{(\hm{Y})}{\hm{X}}{k}$.
\end{claimm}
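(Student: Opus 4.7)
The plan is to push the Ramsey witness from $\oK_n$ down to $\oM_n$ by showing that, when restricted to a monotone subregion of $\hat Z$, passing from $\hm{Z}$ to its monotone reordering $\hm{Z}^*$ is invisible. The key lemma I would prove first is: for any $S \subseteq \hat{Z}$ such that $\hm{Z} \upharpoonright S$ lies in $\oM_n$, the restricted orderings $<^{\hm{Z}} \upharpoonright S$ and $< \upharpoonright S$ coincide. Indeed, both order $S$ first by $\lambda^{\hm{Z}}$-color and then, within each color class, by $<^{\hm{Z}}$ (since $<$ was defined to agree with $<^{\hm{Z}}$ on each $\lambda^{\hm{Z}}$-preimage). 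Consequently $\hm{Z} \upharpoonright S = \hm{Z}^* \upharpoonright S$ as ordered colored graphs.

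Three consequences fall out. First, every induced $\hm{Z}$-copy of $\hm{X}$ is also an induced $\hm{Z}^*$-copy of $\hm{X}$; second, every induced $\hm{Z}$-copy of $\hm{Y}$ is also an induced $\hm{Z}^*$-copy of $\hm{Y}$; third, and crucially, if $S$ is the underlying set of an induced $\hm{Z}$-copy of $\hm{Y}$, then the induced $\hm{Z}^*$-copies of $\hm{X}$ contained in $S$ are exactly the induced $\hm{Z}$-copies of $\hm{X}$ contained in $S$ --- the reordering creates no new copies of $\hm{X}$ inside an already-monotone subregion, since on such a subregion the two orderings literally agree.

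Given these observations, the rest is transfer. For any coloring $c : \binom{\hm{Z}^*}{\hm{X}} \to [k]$, define $c' : \binom{\hm{Z}}{\hm{X}} \to [k]$ by pulling $c$ back through the inclusion. Applying $\hm{Z} \arrows{(\hm{Y})}{\hm{X}}{k}$ to $c'$ yields an induced $\hm{Z}$-copy $\mc{Y}$ of $\hm{Y}$ on which $c'$ is constant; by the second and third consequences above, $\mc{Y}$ is also an induced $\hm{Z}^*$-copy of $\hm{Y}$, and the $\hm{Z}^*$-copies of $\hm{X}$ inside it coincide with the $\hm{Z}$-copies inside it, so $c$ is constant on them as well.

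The one subtle point worth flagging as the main obstacle is that the inclusion $\binom{\hm{Z}}{\hm{X}} \hookrightarrow \binom{\hm{Z}^*}{\hm{X}}$ is in general strict: the monotone reordering can produce induced copies of $\hm{X}$ in $\hm{Z}^*$ that were not present in $\hm{Z}$, so a naive global matching of copies fails. The key lemma circumvents this by ensuring the asymmetry vanishes as soon as we localize to the underlying set of a monotone copy of $\hm{Y}$ --- which is precisely where homogeneity is needed.
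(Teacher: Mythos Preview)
Your proof is correct and follows the same route as the paper: transfer the coloring from $\hm{Z}^*$ to $\hm{Z}$ via the inclusion $\binom{\hm{Z}}{\hm{X}}\subseteq\binom{\hm{Z}^*}{\hm{X}}$, apply the Ramsey property of $\hm{Z}$, and observe that the resulting monochromatic copy of $\hm{Y}$ is also a copy in $\hm{Z}^*$ with the same set of $\hm{X}$-copies inside it. You are in fact more careful than the paper, which asserts that $\hm{Z}$ and $\hm{Z}^*$ have \emph{exactly} the same copies of $\hm{X}$; your point that this inclusion can be strict globally but becomes an equality on any subset that is already monotone in $\hm{Z}$ is correct and is precisely what the argument requires.
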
 

\begin{proof}
Let \[ \alpha : \funct{\binom{\hm{Z}^*}{\hm{X}}}{[k]}. \]

Observe that monotone substructures in $\hm{Z}$ induce monotone substructures in $\hm{Z} ^*$ and vice-versa. In particular, $\hm{Z}$ and $\hm{Z} ^*$ have exactly the same copies of $\hm{X}$ and $\hm{Y}$. It follows that $\alpha$ can be seen as a map from $\binom{\hm{Z}}{\hm{X}}$ to $[k]$. By construction of $\hm{Z}$, there is a copy $\mc{Y}$ of $\hm{Y}$ in $\hm{Z}$ (hence in $\hm{Z}^*$) such that $\binom{\mc{Y}}{\hm{X}}$ is $\alpha$-monochromatic. $\qedhere$ \end{proof}

\subsection{Proof of $t_{\C _n}(\m{X}) \leq \tau(\m{X})$}

Proceed as in section \ref{subsection:t<sigma}, working with $\C _n$ instead of $\oC _n$ and $\oM _n$ instead of $\oK _n$. $\qed$ 

\subsection{Proof of $t_{\C _n}(\m{X}) \geq \tau(\m{X})$}

The proof resembles the proof performed in \ref{subsection:t>sigma}. Say that two linear orderings $<$ and $<'$ are \emph{opposite} on a given set when $x<y$ iff $y<'x$ on that set. Let $\hm{Y}$ be any disjoint union of all extensions of $\m{X}$ in $\oM _n$, together with an ordered copy of $K_n$ (where all vertices necessarily receive different $\lambda^{\hm{Y}}$-colors).

\begin{claimm}
\label{claim:Yeq<}
Let $<$ be a linear ordering on $\hat{Y}$ such that on every $\lambda^{\hm{Y}}$-preimage, $<$ and $<^{\hm{Y}}$ coincide or are opposite. Let $f$ be a permutation of $[n]$ and set $\lambda = f \circ \lambda^{\hm{Y}}$. Assume that $(\hat{Y}, E^{\hm{Y}}, <, \lambda) \in \oM _n$. Then every extension of $\m{X}$ in $\oM _n$ embeds in $(\hat{Y}, E^{\hm{Y}}, <, \lambda)$. 
\end{claimm}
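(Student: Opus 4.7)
The strategy mirrors Claim~\ref{claim:Yeq}, but we now have to compensate not only for the color permutation $f$ but also for the fact that $<$ may be opposite to $<^{\hm{Y}}$ on some $\lambda^{\hm{Y}}$-preimages. The idea is: given an arbitrary extension $\hm{X}'$ of $\m{X}$ in $\oM_n$, build an auxiliary extension $\hm{X}''$ of $\m{X}$ in $\oM_n$ whose ordering and coloring are rigged to \emph{cancel} both twists when composed with an embedding into $\hm{Y}$.

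\textbf{Construction of $\hm{X}''$.} Keep the vertex set and the edges of $\hm{X}'$. Set $\lambda^{\hm{X}''} := f^{-1} \circ \lambda^{\hm{X}'}$, still a proper $n$-coloring since $f^{-1}$ is a bijection. For the ordering, first order by color class (smaller color before larger); then on the class $(\lambda^{\hm{X}''})^{-1}(j) = (\lambda^{\hm{X}'})^{-1}(f(j))$, let $<^{\hm{X}''}$ coincide with $<^{\hm{X}'}$ if $<$ and $<^{\hm{Y}}$ coincide on $(\lambda^{\hm{Y}})^{-1}(j)$, and be opposite to $<^{\hm{X}'}$ otherwise. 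This produces a monotone element of $\oK_n$ with reduct $\m{X}$ in $\C_n$, i.e.\ an extension of $\m{X}$ in $\oM_n$.

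\textbf{Verification.} Since $\hm{Y}$ is a disjoint union containing every extension of $\m{X}$ in $\oM_n$, there is an embedding $\phi : \hm{X}'' \hookrightarrow \hm{Y}$. The claim is that the \emph{same} map $\phi$ is an embedding of $\hm{X}'$ into $(\hat{Y}, E^{\hm{Y}}, <, \lambda)$. Edges are automatic; colors are handled by $\lambda \circ \phi = f \circ \lambda^{\hm{Y}} \circ \phi = f \circ \lambda^{\hm{X}''} = \lambda^{\hm{X}'}$. For the ordering, split on whether $x, y \in \hat{X}'$ share their $\lambda^{\hm{X}'}$-color: if not, monotonicity of both $\hm{X}'$ and of $(\hat{Y}, E^{\hm{Y}}, <, \lambda)$ forces the order to be read off from the colors, which have just been shown to be preserved; if so, then $\phi(x), \phi(y)$ lie in a single $\lambda^{\hm{Y}}$-preimage, and the coincide/opposite dichotomy there, matched by the parallel dichotomy in the definition of $<^{\hm{X}''}$, gives $x <^{\hm{X}'} y \iff \phi(x) < \phi(y)$ in both sub-cases.

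\textbf{Main obstacle.} The only delicate point is lining up the two twists---on the $\hm{Y}$-side ($<^{\hm{Y}}$ versus $<$) and on the $\hm{X}''$-side ($<^{\hm{X}'}$ versus $<^{\hm{X}''}$)---so that they cancel inside every color class. Once the right recipe for $<^{\hm{X}''}$ is written down, everything else is routine bookkeeping, using only that $\hm{Y}$ by construction contains an isomorphic copy of every extension of $\m{X}$ in $\oM_n$.
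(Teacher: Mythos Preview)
Your proof is correct, but it follows a different route from the paper's.

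The paper argues indirectly by counting: it observes that every subset of $\hat Y$ carrying a copy of $\m{X}$ in $\hm{Y}$ still carries a copy of $\m{X}$ in $(\hat Y, E^{\hm Y}, <, \lambda)$, and then shows that an isomorphism between two such substructures in the \emph{new} structure is automatically an isomorphism in $\hm{Y}$ as well. Hence the partition of these subsets into isomorphism types in the new structure is at least as fine as in $\hm{Y}$; since $\hm{Y}$ already realizes all $\tau(\m{X})$ types and there are only $\tau(\m{X})$ types available, the new structure must realize them all too. Your argument, by contrast, is fully constructive: given a target extension $\hm{X}'$, you manufacture an ``inverse-twisted'' extension $\hm{X}''$ (undoing both the color permutation $f$ and the per-class order reversals) and check that any embedding of $\hm{X}''$ into $\hm{Y}$ is simultaneously an embedding of $\hm{X}'$ into the new structure. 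This is the natural generalization of Claim~\ref{claim:Yeq} and produces the embedding explicitly, without invoking finiteness of $\tau(\m{X})$; the paper's version is shorter to state but relies on that pigeonhole step.
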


\begin{proof}

We show that $(\hat{Y}, E^{\hm{Y}}, <, \lambda)$ contains at least as many isomorphism types of extension of $\m{X}$ in $\oM _n$ as $\hm{Y}$ does. Observe first that every extension of $\m{X}$ in $\hm{Y}$ is still an extension of $\m{X}$ in $(\hat{Y}, E^{\hm{Y}}, <, \lambda)$. So it suffices to show that if two subsets $X_1$ and $X_2$ of $\hat{Y}$ support non isomorphic substructures in $\hm{Y}$, then they also support non isomorphic substructures in $(\hat{Y}, E^{\hm{Y}}, <, \lambda)$. Or equivalently, if $X_1$ and $X_2$ support isomorphic substructures in $(\hat{Y}, E^{\hm{Y}}, <, \lambda)$, then they also do in $\hm{Y}$. 

So let $X_1, X_2$ be subsets of $\hat{Y}$ and $f:\funct{X_1}{X_2}$ witnessing that $X_1$ and $X_2$ support isomorphic substructures of $(\hat{Y}, E^{\hm{Y}}, <, \lambda)$. We show that $f$ is also an isomorphism between the corresponding substructures of $\hm{Y}$. First, it is clear that $f$ preserves the graph structure $E^{\hm{Y}}$ as well as the coloring $\lambda^{\hm{Y}}$. So it remains to show that it preserves $<^{\hm{Y}}$. Let $x <^{\hm{Y}} y \in X_1$. Assume first that $x$ and $y$ belong to the same $\lambda^{\hm{Y}}$-preimage, call it $A$. If $<$ and $<^{\hm{Y}}$ coincide on $A$, then $x < y$. Since $f$ preserves $<$ and $\lambda^{\hm{Y}}$, we have $f(x)<f(y)$ in $A$. So $f(x) <^{\hm{Y}} f(y)$. Similarly, if $<$ and $<^{\hm{Y}}$ are opposite on $A$, then $y < x$. So $f(y)<f(x)$ in $A$. Hence, $f(x) <^{\hm{Y}} f(y)$ and we are done in the case where $\lambda^{\hm{Y}}(x) = \lambda^{\hm{Y}}(y)$. Now, assume that $\lambda^{\hm{Y}}(x)\neq\lambda^{\hm{Y}}(y)$. Then $x' <^{\hm{Y}} y'$ whenever $\lambda^{\hm{Y}}(x')=\lambda^{\hm{Y}}(x)$ and $\lambda^{\hm{Y}}(y')=\lambda^{\hm{Y}}(y)$. Since $f$ preserves $\lambda^{\hm{Y}}$, we may take $f(x)=x'$, $f(y)=y'$, and $f(x) <^{\hm{Y}} f(y)$ follows. $\qedhere$ \end{proof}

For $i \in [n]$, denote by $\hm{K}_2 ^i$ the element of $\oM _n$ consisting of two non adjacent vertices, both with color $i$. By Lemma \ref{thm:RoM}, construct a sequence $(\hm{Y}_i)_{1\leq i \leq n}$ of elements of $\oM_n$ such that \[ \hm{Y}_1 \arrows{(\hm{Y})}{\hm{K}_2 ^1}{2} \] and whenever $2\leq i < n$, \[ \hm{Y}_{i+1} \arrows{(\hm{Y}_i)}{\hm{K}_2 ^{i+1}}{2}. \] 

Set $\hm{Z}=\hm{Y}_{n}$. 

\begin{claimm}
\label{claim:<}
For every linear ordering $<$ with respect to which $\lambda^{\hm{Z}}$-preimages are $<$-convex, there is a copy $\mc{Y}$ of $\hm{Y}$ in $\hm{Z}$ such that on $\lambda^{\mc{Y}}$-preimages, $<$ and $<^{\mc{Y}}$ coincide or are opposite. 
\end{claimm}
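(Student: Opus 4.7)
The plan is to run the iterated Ramsey construction of $\hm{Z}$ in reverse, peeling off one color class at a time. The key observation is that, once we know whether $<$ and $<^{\hm{Z}}$ agree on each pair of vertices of the same $\lambda^{\hm{Z}}$-color, the question of whether $<$ and $<^{\hm{Z}}$ coincide or are opposite on a given preimage is determined by a $2$-coloring of copies of $\hm{K}_2^i$.

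Concretely, for each $i \in [n]$, define a $2$-coloring $c_i$ of $\binom{\hm{Z}}{\hm{K}_2^i}$ by declaring that a copy $(a,b)$, with $a <^{\hm{Z}} b$ and $\lambda^{\hm{Z}}(a) = \lambda^{\hm{Z}}(b) = i$, has color $0$ if $a < b$ and color $1$ if $b < a$. I would now use the chain $\hm{Y}_n \arrows{(\hm{Y}_{n-1})}{\hm{K}_2^n}{2}$, $\hm{Y}_{n-1} \arrows{(\hm{Y}_{n-2})}{\hm{K}_2^{n-1}}{2}$, $\ldots$, $\hm{Y}_1 \arrows{(\hm{Y})}{\hm{K}_2^1}{2}$ to produce, by downward induction on $i$, copies $\mc{Y}_{n-1} \supset \mc{Y}_{n-2} \supset \cdots \supset \mc{Y}_1 \supset \mc{Y}_0 =: \mc{Y}$ of $\hm{Y}_{n-1}, \hm{Y}_{n-2}, \ldots, \hm{Y}_1, \hm{Y}$ respectively, such that $c_j$ is constant on $\binom{\mc{Y}_{j-1}}{\hm{K}_2^j}$ for every $j$. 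Monochromaticity for $c_j$ on a previously selected subcopy is preserved when we pass to a further subcopy, so at the end the copy $\mc{Y}$ of $\hm{Y}$ has $c_i$ constant on $\binom{\mc{Y}}{\hm{K}_2^i}$ for every $i \in [n]$.

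Finally, I would translate monochromaticity back into the statement of the claim. Fix $i \in [n]$ and the $\lambda^{\mc{Y}}$-preimage $A_i$ of color $i$. If $c_i$ is constantly $0$ on $\binom{\mc{Y}}{\hm{K}_2^i}$, then for every $a,b \in A_i$ with $a <^{\mc{Y}} b$ one has $a < b$, so $<$ coincides with $<^{\mc{Y}}$ on $A_i$; if $c_i$ is constantly $1$, the same argument shows that $<$ and $<^{\mc{Y}}$ are opposite on $A_i$. This is exactly the required conclusion.

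I do not expect any serious obstacle here: everything rests on Lemma \ref{thm:RoM} and the by-now-routine downward iteration. The only point worth being careful about is bookkeeping, namely checking that each successive Ramsey reduction preserves the monochromaticity achieved at previous stages, which follows simply from the fact that $\mc{Y}_{j-1}$ is contained in all the $\mc{Y}_{k-1}$ with $k > j$. Note that the $<$-convexity hypothesis on the $\lambda^{\hm{Z}}$-preimages is not used in this argument as such; it is the hypothesis that will make $\mc{Y}$ usable later via Claim \ref{claim:Yeq<}, but for Claim \ref{claim:<} itself only the fact that $<$ is some linear ordering of $\hat{Z}$ matters.
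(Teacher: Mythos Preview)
Your proposal is correct and follows essentially the same route as the paper: define a $2$-coloring recording whether $<$ agrees or disagrees with $<^{\hm{Z}}$ on each monochromatic non-adjacent pair, then unwind the chain $\hm{Y}_n,\hm{Y}_{n-1},\ldots,\hm{Y}_1,\hm{Y}$ to stabilise this coloring on one color class at a time. Your remark that the $<$-convexity hypothesis plays no role in the proof of this particular claim is also correct; the paper states it but does not use it here either.
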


\begin{proof}

Let $<$ be a linear ordering with respect to which $\lambda^{\hm{Z}}$-preimages are $<$-convex. Let $\beta$ be a coloring of the non adjacent pairs of vertices, assigning value $0$ to a pair if $<$ and $<^{\hm{Z}}$ are opposite on that pair, and value $1$ otherwise. By construction of $\hm{Z}=\hm{Y}_n$, find a copy $\mc{Y}_{n-1}$ of $\hm{Y}_{n-1}$ in $\hm{Z}$ where all non adjacent pairs with $\lambda^{\mc{Y}_{n-1}}$-color $n$ have same $\beta$-color. Then, by construction of $\hm{Y}_{n-1}$, find a copy $\mc{Y}_{n-2}$ of $\hm{Y}_{n-2}$ in $\mc{Y}_{n-1}$ where all non adjacent pairs with $\lambda^{\mc{Y}_{n-2}}$-color $(n-1)$ have same $\beta$-color. Note that in $\mc{Y}_{n-2}$, non adjacent pairs with $\lambda^{\mc{Y}_{n-2}}$-color $n$ also have same $\beta$-color. Repeating this process $n$ times, we end up with a copy $\mc{Y}$ of $\hm{Y}$ in $\hm{Y}_{n}$ where all non adjacent pairs with same $\lambda^{\mc{Y}}$-color have same $\beta$-color. Then, on $\lambda^{\mc{Y}}$-preimages, $<$ and $<^{\mc{Y}}$ coincide or are opposite. $\qedhere$ \end{proof}

Now, by Lemma \ref{thm:RoM}, construct a sequence $(\hm{Z}_i)_{1\leq i \leq n}$ of elements of $\oM_n$ such that \[ \hm{Z}_1 \arrows{(\hm{Z})}{\hm{K}_1 ^1}{n^2} \] and whenever $2\leq i < n$, \[ \hm{Z}_{i+1} \arrows{(\hm{Z}_i)}{\hm{K}_1 ^{i+1}}{n^2}. \] 

Set $\hm{T}=\hm{Z}_{n}$. 

\begin{claimm}
\label{claim:oMeq} 
For every coloring $\lambda : \funct{(\hat{T}, E^{\hm{T}})}{[n]}$, there is a permutation $f$ of $[n]$ and a copy $\mc{Z}$ of $\hm{Z}$ in $\hm{T}$ where $\lambda = f \circ \lambda^{\mc{Z}}$.  
\end{claimm}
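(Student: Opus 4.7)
The plan is to repeat the three-move argument used to prove Claim \ref{lem:oKeq}, only this time inside $\oM_n$ rather than $\oK_n$. Given a proper coloring $\lambda \colon (\hat{T}, E^{\hm{T}}) \to [n]$, first I would pair it with the built-in coloring by setting $\mu(z) = (\lambda^{\hm{T}}(z), \lambda(z))$ for $z \in \hat{T}$. This $\mu$ takes at most $n^2$ values, which is precisely why the sequence $(\hm{Z}_i)$ was constructed with $n^2$ target colors.

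Next I would peel off the $\lambda^{\hm{T}}$-color classes one at a time, starting from $\hm{T} = \hm{Z}_n$. The arrow $\hm{Z}_n \arrows{(\hm{Z}_{n-1})}{\hm{K}_1^n}{n^2}$ applied to the restriction of $\mu$ to vertices of $\lambda^{\hm{T}}$-color $n$ yields a copy $\mc{Z}_{n-1}$ of $\hm{Z}_{n-1}$ inside $\hm{T}$ in which every vertex of this color class carries a common $\mu$-value, hence a common $\lambda$-value. Iterating downward through $\hm{Z}_{n-2}, \ldots, \hm{Z}_1, \hm{Z}$ in exactly the same way as in the proof of Claim \ref{lem:oKeq}, I end up with a copy $\mc{Z}$ of $\hm{Z}$ in $\hm{T}$ on which $\lambda$ is constant on each $\lambda^{\mc{Z}}$-preimage. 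Each intermediate copy is automatically monotone because it sits inside the monotone structure $\hm{T}$, so the whole iteration stays within $\oM_n$.

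Finally, I would define $f \colon [n] \to [n]$ by letting $f(i)$ be the common $\lambda$-value on the $i$-th color class of $\mc{Z}$; this is well defined because $\hm{Z}$ contains $\hm{Y}$ and therefore an ordered copy of $K_n$ in which every color of $[n]$ is represented. The only real obstacle is the injectivity of $f$, but this is handled exactly as in Claim \ref{lem:oKeq}: if $f(i) = f(j)$ for some $i \neq j$, then the two vertices of the embedded $K_n$ carrying $\lambda^{\mc{Z}}$-colors $i$ and $j$ would be adjacent while receiving the same $\lambda$-value, contradicting the assumption that $\lambda$ is a proper coloring. Hence $f$ is a permutation of $[n]$ and $\lambda = f \circ \lambda^{\mc{Z}}$ on $\tilde{Z}$, which proves the claim.
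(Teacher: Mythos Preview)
Your proposal is correct and is precisely the argument the paper intends: the paper's own proof of this claim is the single line ``Same proof as for Claim \ref{lem:oKeq}'', and you have simply written out that proof in the present setting, with $\hm{T}$, $\hm{Z}$, $(\hm{Z}_i)$ playing the roles of $\hm{Z}$, $\hm{Y}$, $(\hm{Y}_i)$. Your extra remarks about monotonicity of the intermediate copies and about the presence of $K_n$ inside $\hm{Z}$ via $\hm{Y}$ are accurate and fill in exactly the details the paper leaves implicit.
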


\begin{proof} Same proof as for Claim \ref{lem:oKeq}. $\qedhere$ \end{proof}

\begin{claimm}
Let $<$ be a linear ordering on $\hat{T}$ and $\lambda : \funct{(\hat{T}, E^{\hm{T}})}{[n]}$ be a coloring so that $(\hat{T}, E^{\hm{T}}, <, \lambda) \in \oM _n$. Then there is a copy $\mc{Y}$ of $\hm{Y}$ in $\hm{T}$ where $<$ and $<^{\mc{Y}}$ coincide or are opposite on $\lambda^{\mc{Y}}$-preimages, and where $\lambda = f \circ \lambda^{\mc{Y}}$ for some permutation $f$ of $[n]$. 
\end{claimm}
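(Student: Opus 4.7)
The statement follows by composing Claim \ref{claim:oMeq} with Claim \ref{claim:<}, in that order. The plan is essentially a two-step refinement inside $\hm{T}$: first use $\hm{T}$'s Ramsey-property for colors to reduce to a copy of $\hm{Z}$ on which $\lambda$ becomes a permutation of $\lambda^{\mc{Z}}$, then apply Claim \ref{claim:<} inside that copy of $\hm{Z}$ to find a copy of $\hm{Y}$ that behaves well with respect to $<$.

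More precisely, given $<$ and $\lambda$ as in the hypothesis, I would first apply Claim \ref{claim:oMeq} to the coloring $\lambda : (\hat{T}, E^{\hm{T}}) \to [n]$. This gives a permutation $f$ of $[n]$ and a copy $\mc{Z}$ of $\hm{Z}$ in $\hm{T}$ with $\lambda = f \circ \lambda^{\mc{Z}}$ on $\tilde{Z}$. In particular, the $\lambda^{\mc{Z}}$-preimages in $\tilde{Z}$ coincide, as subsets of $\tilde{Z}$, with the $\lambda$-preimages of $\tilde{Z}$.

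Next, I would observe that since $(\hat{T}, E^{\hm{T}}, <, \lambda) \in \oM _n$, the ordering $<$ is increasing with respect to $\lambda$ on $\hat{T}$, so the $\lambda$-preimages in $\hat{T}$ are $<$-convex; their intersections with $\tilde{Z}$ are therefore also $<$-convex in $\tilde{Z}$. By the previous paragraph, this means exactly that the $\lambda^{\mc{Z}}$-preimages in $\tilde{Z}$ are $<$-convex. I can then apply Claim \ref{claim:<} inside the copy $\mc{Z}$ (the claim is stated for $\hm{Z}$, but its proof was purely Ramsey-theoretic and transports verbatim to any isomorphic copy): there is a copy $\mc{Y}$ of $\hm{Y}$ in $\mc{Z}$ such that on each $\lambda^{\mc{Y}}$-preimage, $<$ and $<^{\mc{Y}}$ coincide or are opposite.

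Finally, since $\mc{Y}$ sits inside $\mc{Z}$, we still have $\lambda^{\mc{Y}} = \lambda^{\mc{Z}}\restriction \tilde{Y}$, hence $\lambda = f \circ \lambda^{\mc{Y}}$ on $\tilde{Y}$, as required. The only point that needs a moment of care is the transport of Claim \ref{claim:<} from $\hm{Z}$ to the isomorphic copy $\mc{Z}$, but this is immediate since the Ramsey statements used in its construction (the arrows involving $\hm{K}_2^i$) are inherited by any copy. I do not expect any genuine obstacle: the role of the present claim is simply to package the two previous claims into a single statement that will feed the final Ramsey-degree lower bound, analogous to Claim \ref{claim:extoK} in the ordered case.
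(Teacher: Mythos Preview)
Your proposal is correct and follows exactly the same route as the paper's proof: apply Claim~\ref{claim:oMeq} to obtain $\mc{Z}$ and the permutation $f$, note that the $\lambda^{\mc{Z}}$-preimages are $<$-convex because the $\lambda$-preimages are, then apply Claim~\ref{claim:<} inside $\mc{Z}$ to extract $\mc{Y}$. Your added remarks about convexity and the transport of Claim~\ref{claim:<} to an isomorphic copy are harmless elaborations of points the paper leaves implicit.
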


\begin{proof}
By Claim \ref{claim:oMeq}, we can find a permutation $f$ of $[n]$ and a copy $\mc{Z}$ of $\hm{Z}$ in $\hm{T}$ where $\lambda = f \circ \lambda^{\mc{Z}}$. Since on $\tilde{Z}$, $\lambda$-preimages are $<$-convex, so are $\lambda^{\mc{Z}}$-preimages. By Claim \ref{claim:<}, there then there is a copy $\mc{Y}$ of $\hm{Y}$ in $\mc{Z}$ where $<$ and $<^{\mc{Y}}$ coincide or are opposite on $\lambda^{\mc{Y}}$-preimages. Note that on $\tilde{Y}$, $\lambda = f \circ \lambda^{\mc{Y}}$, so $\mc{Y}$ is as required. $\qedhere$ \end{proof}

Let $\m{T}$ denote the reduct of $\hm{T}$ in $\C _n$. Together with Claim \ref{claim:Yeq<}, the previous claim directly implies: 

\begin{claimm}
Let $(\m{T}, <, \lambda)$ be an extension of $\m{T}$ in $\oM _n$. Then every extension of $\m{X}$ in $\oM _n$ embeds in $(\m{T}, <, \lambda)$. 
\end{claimm}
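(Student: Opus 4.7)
The plan is to chain the two immediately preceding claims. Viewing the given extension $(\m{T}, <, \lambda)$ of $\m{T}$ in $\oM_n$ as $(\hat{T}, E^{\hm{T}}, <, \lambda) \in \oM_n$ (legitimate since $\m{T}$ is the reduct of $\hm{T}$ in $\C_n$, so they share underlying set and edge relation), I would first invoke the preceding unnamed claim to obtain a permutation $f$ of $[n]$ and a copy $\mc{Y}$ of $\hm{Y}$ in $\hm{T}$ such that (a) on every $\lambda^{\mc{Y}}$-preimage the orderings $<$ and $<^{\mc{Y}}$ coincide or are opposite, and (b) $\lambda = f \circ \lambda^{\mc{Y}}$ holds on $\tilde{Y}$.

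Next, I would restrict $<$ and $\lambda$ to $\tilde{Y}$. Since $(\tilde{Y}, E^{\mc{Y}}, <, \lambda)$ is an induced substructure of $(\m{T}, <, \lambda) \in \oM_n$, it belongs to $\oM_n$ as well ($\lambda$ remains a proper coloring and stays increasing with respect to $<$). This places us exactly in the hypothesis of Claim \ref{claim:Yeq<}, applied to $\mc{Y}$ in place of $\hm{Y}$: the ordering $<$ satisfies the coincide-or-opposite condition on $\lambda^{\mc{Y}}$-preimages by (a), and $\lambda = f \circ \lambda^{\mc{Y}}$ by (b), with $(\tilde{Y}, E^{\mc{Y}}, <, \lambda) \in \oM_n$.

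Applying Claim \ref{claim:Yeq<} then yields that every extension of $\m{X}$ in $\oM_n$ embeds into $(\tilde{Y}, E^{\mc{Y}}, <, \lambda)$, which is an induced substructure of $(\m{T}, <, \lambda)$, giving the desired embedding. There is no genuine obstacle here: the work has already been performed in Claims \ref{claim:Yeq<} and \ref{claim:oMeq} (together with the ordering-selection claim based on Claim \ref{claim:<}), and the only thing to verify is that the output of the first claim matches the input of the second, which is immediate from the way $\mc{Y}$ and $f$ were produced.
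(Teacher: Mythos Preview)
Your proposal is correct and follows exactly the approach the paper indicates: the paper states this claim with no separate proof, simply noting that ``together with Claim~\ref{claim:Yeq<}, the previous claim directly implies'' it. You have spelled out the chaining of those two claims in detail, which is precisely what is needed; the only (harmless) subtlety is that Claim~\ref{claim:Yeq<} is stated for $\hm{Y}$ itself rather than an isomorphic copy $\mc{Y}$, but the statement is isomorphism-invariant so this transfer is immediate.
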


Therefore, as in section \ref{subsection:t<sigma}, we obtain: 

\begin{claimm}
For every $\m{U} \in \C _n$, there is $\alpha : \funct{\binom{\m{U}}{\m{X}}}{[\tau(\m{X})]}$ such that for every copy $\mc{T}$ of $\m{T}$ in $\m{U}$, $\alpha$ has range $[\tau(\m{X})]$ on $\binom{\mc{T}}{\m{X}}$.   
\end{claimm}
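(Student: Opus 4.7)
The plan is to imitate the argument of Claim \ref{claim:RdoC} from section \ref{subsection:t>sigma}, substituting $\C_n$ for $\oC_n$ and $\oM_n$ for $\oK_n$, and invoking the immediately preceding claim in place of Claim \ref{claim:extoK}.

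Concretely, given $\m{U} \in \C_n$, I would begin by fixing an arbitrary extension $(\m{U}, <, \lambda)$ of $\m{U}$ in $\oM_n$. Such an extension exists because $\m{U}$ is $n$-colorable: pick any proper $n$-coloring $\lambda$, and then choose any linear ordering $<$ on the vertex set of $\m{U}$ that lists the vertices of color $1$ first, then the vertices of color $2$, and so on, making $\lambda$ increasing by construction.

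Next, I would enumerate the $\tau(\m{X})$ non-isomorphic extensions of $\m{X}$ in $\oM_n$ as $\hm{X}_1,\ldots,\hm{X}_{\tau(\m{X})}$. For every copy $\mc{X}$ of $\m{X}$ in $\m{U}$, the restrictions of $<$ and $\lambda$ to the underlying vertex set $\tilde{X}$ turn $\mc{X}$ into an extension of $\m{X}$ in $\oM_n$, isomorphic to exactly one $\hm{X}_i$; set $\alpha(\mc{X}) := i$.

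It remains to show that for every copy $\mc{T}$ of $\m{T}$ in $\m{U}$ the map $\alpha$ hits all of $[\tau(\m{X})]$ on $\binom{\mc{T}}{\m{X}}$. But the restrictions of $<$ and $\lambda$ to $\tilde{T}$ make $\mc{T}$ into an extension of $\m{T}$ in $\oM_n$, so the preceding claim applies and guarantees that every extension of $\m{X}$ in $\oM_n$ embeds into $(\mc{T}, <\!\restriction\!\tilde{T}, \lambda\!\restriction\!\tilde{T})$. In particular, for each $i \in [\tau(\m{X})]$, some copy of $\hm{X}_i$ sits inside $\mc{T}$, which produces a copy $\mc{X} \in \binom{\mc{T}}{\m{X}}$ with $\alpha(\mc{X})=i$. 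No real obstacle arises: the argument is a direct formal parallel of section \ref{subsection:t>sigma}, and the only minor verification needed is the trivial fact that an $n$-colorable graph always admits a monotone ordered $n$-coloring.
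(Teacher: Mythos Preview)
Your proposal is correct and follows exactly the approach the paper intends: the paper's own proof is the single line ``Same as proof of Claim~\ref{claim:RdoC}'', and you have faithfully transcribed that argument with $\C_n$ in place of $\oC_n$, $\oM_n$ in place of $\oK_n$, and the immediately preceding claim playing the role of Claim~\ref{claim:extoK}. The extra remark that an $n$-colorable graph admits a monotone extension is routine and correctly justified.
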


\begin{proof} Same as proof of Claim \ref{claim:RdoC}. $\qedhere$ \end{proof}

The inequality $t_{\C _n}(\m{X}) \geq \tau(\m{X})$ follows. 

\subsection{Computation of $t_{\C _n}(\m{X})$ in elementary cases}

As previously, we conclude this section with computations providing some values for $t_{\C _n}(\m{X})$. When $\m{X}$ is the complete ordered graph $K_m$, with $m\leq n$, we have $t_{\C _n}(\m{X}) = \binom{n}{m}$. Note that the same result holds when $\m{X}$ is complete $m$-partite, with $m\leq n$ and each part being of any size $l$. Therefore, $t_{\C _n}(\m{X})=1$ whenever $m=n$ in any of those cases.

\end{document}